\newtheorem{theorem}{Theorem}[section]
\newtheorem{lemma}[theorem]{Lemma}
\newtheorem{proposition}[theorem]{Proposition}
\newtheorem{corollary}[theorem]{Corollary}
\theoremstyle{definition}
\theoremstyle{remark}
\newtheorem{remark}[theorem]{Remark}
\newtheorem{example}[theorem]{Example}
\newcommand\pf{\begin{proof}}
\newcommand\epf{\end{proof}}
\renewcommand\H{\mathrm{H}}
\newcommand\yd{\mathcal{YD}}
\newcommand\ext{\mathrm{Ext}}
\newcommand\co{\operatorname{co}}
\newcommand\cd{\mathrm{cd}}
\DeclareMathOperator{\Ext}{Ext}
\DeclareMathOperator{\id}{id}
\DeclareMathOperator{\GL}{GL}
\numberwithin{equation}{section}
\title{Bialgebra cohomology and exact sequences}
\author{Julien Bichon}
\address{ Universit\'e Clermont Auvergne, CNRS, LMBP, F-63000 CLERMONT-FERRAND, FRANCE}
\email{julien.bichon@uca.fr}
\subjclass[2010]{16T05, 16E40}
\begin{document}

\begin{abstract}
We show how the bialgebra cohomologies of two Hopf algebras involved in an exact sequence are related, when the third factor is finite-dimensional cosemisimple. As an application, we provide a short proof of the computation of the bialgebra cohomology of the universal cosovereign Hopf algebras in the generic (cosemisimple) case, done recently by Baraquin, Franz, Gerhold, Kula and Tobolski. 
\end{abstract}

\maketitle

 \section{introduction}

Gerstenhaber-Schack cohomology, which includes bialgebra cohomology as a special instance, is a cohomology theory adapted to Hopf algebras. It was introduced in \cite{gs1,gs2} by means of an explicit bicomplex modeled on the Hochschild complex of the underlying algebra and the Cartier complex of the underlying coalgebra, with deformation theory as a motivation. See \cite{shst} for an exposition, with the original coefficients being Hopf bimodules, but in view of the equivalence between Hopf bimodules and Yetter-Drinfeld modules \cite{sc94},  one can work in the simpler framework of Yetter-Drinfeld modules.

Gerstenhaber-Schack cohomology has been useful in proving some fundamental results in Hopf algebra theory \cite{ste2, eg}, but few concrete computations were known (see \cite{pw, shst}) until it was shown by Taillefer \cite{tai04} that Gerstenhaber-Schack cohomology can be identified with the ${\rm Ext}$ functor on the category of Yetter-Drinfeld modules: if $A$ is a Hopf algebra, $V$ is a Yetter-Drinfeld module over $A$ and $k$ is the trivial Yetter-Drinfeld module, one has
$$H_{\rm GS}^*(A,V)\simeq \ext^*_{\yd_A^A}(k, V)$$
The bialgebra cohomology of $A$ is then defined by  $H_b^*(A)=H_{\rm GS}^*(A,k)$.
We will use this ${\rm Ext}$ description, which opens the way to use classical tools of homological algebra,  as a definition. Note that the category $\yd_A^A$ has enough injective objects \cite{camizh97, tai04}, so the above ${\rm Ext}$ spaces can be studied using injective resolutions of $V$, and  when $\yd_A^A$ has enough projective objects (for example if $A$ is cosemisimple, or more generally if $A$ is co-Frobenius), they  can also be computed by using projective resolutions of the trivial module.

This note is a contribution to the study of Gerstenhaber-Schack cohomology: we show how the bialgebra (and Gerstenhaber-Schack) cohomologies of two Hopf algebras involved in an exact sequence of Hopf algebras are related when the third factor is a finite-dimensional cosemisimple Hopf algebra, see Theorem \ref{thm:hgssubL}. When the third factor  
is the semisimple group algebra of a finite abelian group, the result even takes a nicer form, see Corollary \ref{cor:hgssub}. 

We apply our result to provide a computation of the bialgebra cohomology of the universal cosovereign Hopf algebras \cite{bi18} in the generic (cosemisimple) case, a class of Hopf algebras that we believe to be of particular interest in view of their universal property, see \cite{bi07}. Such a computation has just been done by  Baraquin, Franz, Gerhold, Kula and Tobolski \cite{bfgkt}, but the present proof is shorter.



\section{Preliminaries}

We work over an algebraically closed field $k$, and use standard notation from Hopf algebra theory, for which a standard reference is \cite{mon}.


\subsection{Exact sequences of Hopf algebras} Recall that a sequence  of Hopf algebra maps
\begin{equation*}k \to B \overset{i}\to A \overset{p}\to L \to
k\end{equation*} is said to be exact \cite{ad} if the following
conditions hold:
\begin{enumerate}\item $i$ is injective and $p$ is surjective,
\item ${\rm Ker}(p) =Ai(B)^+ =i(B)^+A$, where $i(B)^+=i(B)\cap{\rm Ker}(\varepsilon)$,
\item $i(B) = A^{\co L} = \{ a \in A:\, (\id \otimes p)\Delta(a) = a \otimes 1
\} = {^{\co L}A} = \{ a \in A:\, (p \otimes \id)\Delta(a) = 1 \otimes a
\}$. \end{enumerate}
Note that condition (2) implies  $pi= \varepsilon 1$.

In an exact sequence as above, we can assume, without loss of generality, that $B$ is Hopf subalgebra and $i$ is the inclusion map. 

A Hopf algebra exact sequence $k \to B \overset{i}\to A \overset{p}\to L \to
k$ is said to be cocentral if the Hopf algebra map $p$ is cocentral, that is for any $a\in A$, we have $p(a_{(1)})\otimes a_{(2)}= p(a_{(2)})\otimes a_{(1)}$. 


\subsection{Yetter-Drinfeld modules}  Recall that a (right-right) Yetter-Drinfeld
module over a Hopf algebra $A$ is a right $A$-comodule and right $A$-module $V$
satisfying the condition, $\forall v \in V$, $\forall a \in A$, 
$$(v \cdot a)_{(0)} \otimes  (v \cdot a)_{(1)} =
v_{(0)} \cdot a_{(2)} \otimes S(a_{(1)}) v_{(1)} a_{(3)}$$
The category of Yetter-Drinfeld modules over $A$ is denoted $\yd_A^A$:
the morphisms are the $A$-linear and $A$-colinear maps.  The category $\yd_A^{A}$ is obviously abelian,
and, endowed with the usual tensor product of 
modules and comodules, is a tensor category, with unit the trivial Yetter-Drinfeld module, denoted $k$.


\begin{example}\label{ex:ydquot}
Let  $B \subset A$ be a Hopf subalgebra, and consider the quotient coalgebra $L = A /B^+A$.
Endow $L$ with the right $A$-module structure induced by the quotient map $p : A\to L$, i.e $p(a)\cdot b = p(ab)$ and with the coadjoint $A$-comodule structure given $p(a)\mapsto p(a_{(2)})\otimes S(a_{(1)})a_{(3)}$.
Then $L$, endowed with these two structures, is a Yetter-Drinfeld module over $A$. 
In particular if $k \to B \overset{i}\to A \overset{p}\to L \to k$ is an exact sequence of Hopf algebras, then $L$ inherits a Yetter-Drinfeld module structure over $A$.  
\end{example}


\begin{example}\label{ex:yd1d}
 Let $\psi : A\to k$ be an algebra map satisfying $\psi(a_{(1)}) a_{(2)}= \psi(a_{(2)}) a_{(1)}$ for any $a\in A$. Endow $k$ with the trivial $A$-comodule structure and with the $A$-module structure induced by $\psi$. Then $k$, endowed with these two structures, is a Yetter-Drinfeld module over $A$, that we denote $k_\psi$. 
\end{example}

Examples \ref{ex:ydquot} and \ref{ex:yd1d} are related by the following lemma.

\begin{lemma}\label{lemm}
 Let $p : A \to k\Gamma$ be surjective cocentral Hopf algebra map, where $\Gamma$ is a group. For $\psi \in \widehat{\Gamma} ={\rm Hom}(\Gamma, k^*)$, we still denote by $\psi$ the composition of the unique extension of $\psi$ to $k\Gamma$ with $p$. If $\Gamma$ is finite abelian and $|\Gamma|\not = 0$ in $k$, the Fourier transform is an isomorphism
$$k\Gamma \simeq \bigoplus_{\psi \in \widehat{\Gamma}}k_\psi$$
in the category $\yd_A^A$, where $k\Gamma$ has the coadjoint Yetter-Drinfeld structure given in Example \ref{ex:ydquot}, and the right-handed term has the Yetter-Drinfeld structure from Example \ref{ex:yd1d}.
\end{lemma}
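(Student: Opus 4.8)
The plan is to realize the stated isomorphism through the explicit Fourier idempotents and then to check, one structure at a time, that each summand is the Yetter--Drinfeld module $k_\psi$. For $\psi \in \widehat{\Gamma}$ set
$$e_\psi = \frac{1}{|\Gamma|} \sum_{g \in \Gamma} \psi(g)^{-1} g \in k\Gamma.$$
Since $|\Gamma| \neq 0$ in $k$ and $k$ is algebraically closed, $\widehat{\Gamma}$ has exactly $|\Gamma|$ elements and the $e_\psi$ form a complete system of orthogonal idempotents, so that $k\Gamma = \bigoplus_{\psi \in \widehat{\Gamma}} k e_\psi$ as algebras, each $k e_\psi$ being one-dimensional. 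I would then show that $1 \mapsto e_\psi$ defines an isomorphism $k_\psi \simeq k e_\psi$ in $\yd_A^A$, from which the statement follows by taking the direct sum.

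For the module structure, the key observation is that since $p$ is an algebra map the right $A$-action on $L = k\Gamma$ from Example \ref{ex:ydquot} factors through $p$: for $\ell \in k\Gamma$ and $a \in A$ one has $\ell \cdot a = \ell\, p(a)$, the product being taken in $k\Gamma$. A direct computation with the group-algebra basis gives $e_\psi\, h = \psi(h) e_\psi$ for every $h \in \Gamma$, hence $e_\psi\, z = \psi(z) e_\psi$ for all $z \in k\Gamma$, where $\psi : k\Gamma \to k$ is the character extending $\psi$. Therefore $e_\psi \cdot a = \psi(p(a))\, e_\psi = \psi(a)\, e_\psi$, with $\psi$ on the right denoting the composite algebra map $A \to k$ of the statement; this is exactly the module structure of $k_\psi$ from Example \ref{ex:yd1d}.

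The heart of the proof is to check that the coadjoint coaction $\rho$ of Example \ref{ex:ydquot} is trivial on $L = k\Gamma$, so that it matches the trivial comodule structure of each $k_\psi$, and here cocentrality is used in an essential way. Applying the cocentrality identity $p(x_{(1)}) \otimes x_{(2)} = p(x_{(2)}) \otimes x_{(1)}$ to the first tensorand of $\Delta(a)$ and invoking coassociativity yields
$$\sum p(a_{(1)}) \otimes a_{(2)} \otimes a_{(3)} = \sum p(a_{(2)}) \otimes a_{(1)} \otimes a_{(3)}.$$
Applying $\id \otimes S \otimes \id$ and then multiplying the last two tensor factors, the right-hand side becomes $\sum p(a_{(2)}) \otimes S(a_{(1)}) a_{(3)} = \rho(p(a))$, while the left-hand side becomes $\sum p(a_{(1)}) \otimes S(a_{(2)}) a_{(3)} = \sum p(a_{(1)}) \varepsilon(a_{(2)}) \otimes 1 = p(a) \otimes 1$ by the antipode axiom. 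Hence $\rho(p(a)) = p(a) \otimes 1$ and the coaction is trivial. The same computation also shows that the composite $A \to k\Gamma \to k$ satisfies the condition of Example \ref{ex:yd1d}, so that each $k_\psi$ is well defined.

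Putting these together, $1 \mapsto e_\psi$ is both $A$-linear (by the module computation) and $A$-colinear (both comodules being trivial), hence a morphism in $\yd_A^A$; being a linear isomorphism onto $k e_\psi$, it is an isomorphism of Yetter--Drinfeld modules, and summing over $\psi \in \widehat{\Gamma}$ gives the Fourier isomorphism $\bigoplus_\psi k_\psi \simeq k\Gamma$. I expect the main obstacle to be precisely the coaction computation: keeping the Sweedler bookkeeping correct when transferring $p$ across tensor factors via cocentrality, and recognizing the resulting expression as a triviality through the antipode axiom. The module part and the final assembly are routine.
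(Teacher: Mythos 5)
Your proof is correct, and it is at bottom the same Fourier-transform argument as the paper's: the paper defines the map $\mathcal F : k\Gamma \to \bigoplus_\psi k_\psi$, $g \mapsto \sum_\psi \psi(g)e_\psi$, while you work with the inverse picture, decomposing $k\Gamma$ internally via the orthogonal idempotents $e_\psi = |\Gamma|^{-1}\sum_g \psi(g)^{-1}g$. The differences are in the verifications, and two of yours are worth pointing out. First, for $A$-linearity the paper invokes the algebra grading $A = \bigoplus_{g\in\Gamma} A_g$ induced by the cocentral surjection $p$, picks homogeneous lifts of group elements, and computes; you instead observe that since $p$ is an algebra map the Example \ref{ex:ydquot} action satisfies $\ell \cdot a = \ell\, p(a)$, after which linearity reduces to the one-line identity $e_\psi h = \psi(h)e_\psi$. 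This is more elementary and avoids the grading entirely. Second, the paper merely \emph{asserts} that cocentrality forces the coadjoint coaction on $k\Gamma$ to be trivial, whereas you prove it: your Sweedler computation (cocentrality applied to the first leg of $\Delta(a)$, then $\id \otimes S \otimes \id$ followed by multiplication, with the antipode axiom collapsing one side to $p(a)\otimes 1$) is exactly the missing verification, and it simultaneously yields the condition $\psi(a_{(1)})a_{(2)} = \psi(a_{(2)})a_{(1)}$ needed for $k_\psi$ to be a well-defined object of $\yd_A^A$, a point the paper leaves implicit. So your write-up is, if anything, more self-contained than the original; the only standard fact you cite without proof (that the $e_\psi$ form a complete orthogonal system of idempotents when $|\Gamma| \neq 0$ in the algebraically closed field $k$) is the same fact the paper uses to see that $\mathcal F$ is a linear isomorphism.
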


\begin{proof}
 The Fourier transform is defined by
$$
 \mathcal F : k\Gamma \longrightarrow \bigoplus_{\psi \in \widehat{\Gamma}}k_\psi, \quad
\Gamma \ni g \longmapsto \sum_{\psi\in \widehat{\Gamma}} \psi(g)e_\psi$$
where $e_\psi$ denotes the basis element in $k_\psi$,  and since $k$ is algebraically closed, the assumption $|\Gamma|\not = 0$ in $k$ ensures that $\mathcal F$ is a linear isomorphism. The cocentrality assumption on $p$ ensures that the $A$-comodule structure on $k\Gamma$ from Example \ref{ex:ydquot} is trivial, so $\mathcal F$ is a comodule map as well.
To prove the $A$-linearity of $\mathcal F$, recall first that $p : A \to k\Gamma$ induces and algebra grading
$$A = \bigoplus_{g\in \Gamma} A_g$$
where $A_g = \{ a \in A \ | \ a_{(1)}\otimes p(a_{(2)}) = a\otimes g\}$, with for $a \in A_g$, $p(a) =\varepsilon(a) g$. 
For $g \in \Gamma$, pick $a\in A_g$ such that $p(a)=g$. For $h\in \Gamma$ and $a'\in A_h$, we have $aa'\in A_{gh}$ and hence
\begin{align*}\mathcal F(g\cdot a') & = \mathcal F(p(aa')) =\mathcal F(\varepsilon(aa')gh)=\varepsilon(a') \sum_{\psi\in \widehat{\Gamma}} \psi(gh)e_\psi =\varepsilon(a') \sum_{\psi\in \widehat{\Gamma}} \psi(g)\psi(h)e_\psi \\
& =   \sum_{\psi\in \widehat{\Gamma}} \psi(g)\varepsilon(a')\psi(h)e_\psi =  \sum_{\psi\in \widehat{\Gamma}} \psi(g)\psi(p(a'))e_\psi =  \sum_{\psi\in \widehat{\Gamma}} \psi(g)e_\psi\cdot a' = \mathcal F(g)\cdot a'
\end{align*}
and this concludes the proof.
\end{proof}

\section{Main results}

The main tool to prove our main results will be induction and restriction of Yetter-Drinfeld modules, that we first recall.

Let $B \subset A$ be a Hopf subalgebra. Recall \cite{camizh,bi18} that we have a pair of adjoint functors
\begin{align*}
 \yd^A_A  \longrightarrow \yd^B_B &\quad \quad \yd_B^B \longrightarrow \yd_A^A \\ 
X  \longmapsto X^{(B)}& \quad \quad V \longmapsto V\otimes_B A
\end{align*}
constructed as follows:
\begin{enumerate}
 \item  For an object $X$ in $\yd_A^A$, 
$X^{(B)}=\{x \in X \ | \ x_{(0)} \otimes x_{(1)} \in X \otimes B\}$ is equipped with the obvious $B$-comodule structure, and is a $B$-submodule of $X$. We have $X^{(B)}\simeq X\square_A B$, where the right term is the cotensor product, and we say that $B\subset A$ is (right) coflat when the above functor is exact.
\item For an object $V \in \yd_B^B$, the induced $A$-module $V\otimes_B A$ has the $A$-comodule structure given by the map
$$v \otimes_B a \mapsto v_{(0)} \otimes_B a_{(2)} \otimes S(a_{(1)}) v_{(1)} a_{(3)}$$
\end{enumerate}

We then have the following result  \cite[Proposition 3.3]{bi18}, which follows from the general machinery of pairs of adjoint functors.

\begin{proposition}\label{prop:adjyd}
 Let $B \subset A$ be a Hopf subalgebra. 
If $B \subset A$ is coflat and $A$ is flat as a left $B$-module, we have, for any object $X$ in $\yd_A^A$ and any  object $V$ in $\yd_B^B$, natural isomorphisms
$${\rm Ext}_{\yd_A^A}^*(V \otimes_B A, X) \simeq {\rm Ext}_{\yd_B^B}^*(V, X^{(B)})$$ 
\end{proposition}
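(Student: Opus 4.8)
The plan is to derive the given adjunction isomorphism
$$\Hom_{\yd_A^A}(V \otimes_B A, X) \simeq \Hom_{\yd_B^B}(V, X^{(B)})$$
(natural in both variables) by means of an injective resolution in $\yd_A^A$, showing that under the stated hypotheses the right adjoint $(-)^{(B)}$ transports an injective resolution of $X$ to an injective resolution of $X^{(B)}$. Write $F = - \otimes_B A \colon \yd_B^B \to \yd_A^A$ for the induction functor (the left adjoint) and $G = (-)^{(B)} \colon \yd_A^A \to \yd_B^B$ for the restriction functor (the right adjoint). Both $\yd_A^A$ and $\yd_B^B$ have enough injective objects, so all the $\ext$-spaces can be computed from injective resolutions of the second argument.

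First I would record the two exactness statements that the hypotheses provide. Since a sequence in a category of Yetter-Drinfeld modules is exact if and only if it is exact as a sequence of vector spaces, exactness of $F$ and of $G$ can be tested after forgetting the module and comodule structures. On underlying objects $F$ is the functor $- \otimes_B A$, hence exact precisely because $A$ is flat as a left $B$-module; and $G$ is, via the natural identification $X^{(B)} \simeq X \square_A B$ recalled above, the cotensor functor $- \square_A B$, which is exact precisely because $B \subset A$ is assumed coflat.

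Next I would invoke the standard fact that a right adjoint whose left adjoint is exact preserves injective objects: if $I$ is injective in $\yd_A^A$, then $\Hom_{\yd_B^B}(-, GI) \simeq \Hom_{\yd_A^A}(F(-), I)$ is exact, because $F$ is exact and $I$ is injective, so $GI$ is injective in $\yd_B^B$. Now choose an injective resolution $0 \to X \to I^\bullet$ in $\yd_A^A$. Applying $G$ and using its exactness (from coflatness), the complex $0 \to X^{(B)} \to G I^\bullet$ is again exact, and by the previous remark each $G I^n$ is injective; hence $G I^\bullet$ is an injective resolution of $X^{(B)} = GX$ in $\yd_B^B$. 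Applying the adjunction isomorphism degreewise then yields
$$\ext^*_{\yd_A^A}(V \otimes_B A, X) = H^*\bigl(\Hom_{\yd_A^A}(FV, I^\bullet)\bigr) \simeq H^*\bigl(\Hom_{\yd_B^B}(V, G I^\bullet)\bigr) = \ext^*_{\yd_B^B}(V, X^{(B)}),$$
with naturality in $V$ and $X$ inherited from naturality of the adjunction.

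The main point to watch, rather than a genuine obstacle, is the complementary role of the two hypotheses: coflatness of $B \subset A$ is exactly what makes $G$ exact (needed so that $G I^\bullet$ remains a resolution of $GX$), while left $B$-flatness of $A$ is exactly what makes $F$ exact (needed so that $G$ preserves injectives). Both are therefore essential, each securing one of the two properties that together promote $G I^\bullet$ to an injective resolution. I would also verify that $X^{(B)} \simeq X \square_A B$ is an isomorphism of functors, not merely objectwise, so that exactness genuinely transfers, and that the adjunction isomorphism is natural in the second variable, which is what licenses applying it termwise along $I^\bullet$.
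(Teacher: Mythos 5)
Your proof is correct and is precisely the ``general machinery of pairs of adjoint functors'' that the paper invokes (via the citation to \cite[Proposition 3.3]{bi18}): flatness makes the left adjoint $-\otimes_B A$ exact so that $(-)^{(B)}$ preserves injectives, coflatness makes $(-)^{(B)}$ exact so that it carries an injective resolution of $X$ to one of $X^{(B)}$, and the adjunction isomorphism applied degreewise gives the isomorphism on $\ext$. Since this matches the paper's (cited) argument, there is nothing to add.
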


\begin{remark}\label{rem:indtriv}
Let  $B \subset A$ be a Hopf subalgebra, and consider the quotient coalgebra $L = A /B^+A$. Recall from Example \ref{ex:ydquot} that $L$  has a natural Yetter-Drinfeld module structure over $A$. The induced Yetter-Drinfeld module
$k\otimes_B A$ is isomorphic to $L$ in $\yd_A^A$.
\end{remark}

 \begin{theorem}\label{thm:hgssubL}
Let $k\to B \to A \to L \to k$ be an  exact sequence of Hopf algebras, with $L$ finite-dimensional and cosemisimple. We have, for any $X \in \yd_A^A$, 
$$H^*_{\rm GS}(B, X^{(B)}) \simeq  H_{\rm GS}^*(A, X\otimes L^*)$$
and hence in particular
$$H^*_b(B) \simeq  H_{\rm GS}^*(A, L^*)$$
where $L^*$ is the dual Yetter-Drinfeld module of $L$
\end{theorem}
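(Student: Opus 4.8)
The plan is to combine Taillefer's $\Ext$ description of Gerstenhaber--Schack cohomology with the adjunction in Proposition \ref{prop:adjyd}, using Remark \ref{rem:indtriv} to identify the induced trivial module with $L$. By definition $H^*_{\rm GS}(B,X^{(B)})\simeq\ext^*_{\yd_B^B}(k,X^{(B)})$, where $k$ is the trivial Yetter--Drinfeld module over $B$. The first task is to verify that the hypotheses of Proposition \ref{prop:adjyd} hold in an exact sequence with $L$ finite-dimensional cosemisimple: I would check that $B\subset A$ is coflat and that $A$ is flat (indeed faithfully flat) as a left $B$-module. For an exact sequence with cosemisimple quotient $L$, the quotient $L=A/B^+A$ is cosemisimple, which should force the coflatness of the functor $X\mapsto X^{(B)}\simeq X\square_A B$; faithful flatness of $A$ over $B$ is standard for exact sequences (it is essentially part of the definition, via the work of Schneider/Takeuchi).

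Granting the hypotheses, Proposition \ref{prop:adjyd} applied with $V=k\in\yd_B^B$ gives
\begin{equation*}
\ext^*_{\yd_B^B}(k,X^{(B)})\simeq \ext^*_{\yd_A^A}(k\otimes_B A,\ X).
\end{equation*}
By Remark \ref{rem:indtriv} we have $k\otimes_B A\simeq L$ in $\yd_A^A$, so the right-hand side becomes $\ext^*_{\yd_A^A}(L,X)$. Thus the crux is to rewrite $\ext^*_{\yd_A^A}(L,X)$ as $\ext^*_{\yd_A^A}(k,X\otimes L^*)$. Here is where finite-dimensionality and cosemisimplicity of $L$ enter decisively: since $L$ is finite-dimensional, it is rigid (dualizable) in the tensor category $\yd_A^A$, with dual $L^*$, and I expect the tensor-hom type adjunction $\Hom_{\yd_A^A}(L,X)\simeq\Hom_{\yd_A^A}(k,X\otimes L^*)$ to hold on morphisms. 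The point to be careful about is promoting this to an isomorphism of $\ext$-groups, i.e. checking that tensoring by the rigid object $L$ (equivalently $L^*$) is exact and sends injectives to injectives (or projectives to projectives), so that it yields a natural isomorphism on all derived functors; rigidity of $L$ is exactly what makes $-\otimes L$ biadjoint to $-\otimes L^*$ and hence exact, and cosemisimplicity of $L$ should ensure the resulting functor preserves the relevant resolutions.

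Combining these two isomorphisms yields
\begin{equation*}
H^*_{\rm GS}(B,X^{(B)})\simeq\ext^*_{\yd_A^A}(k,X\otimes L^*)\simeq H^*_{\rm GS}(A,X\otimes L^*),
\end{equation*}
which is the first assertion. The particular case follows by specializing $X=k$: then $X^{(B)}=k$, the trivial module over $B$, so the left-hand side is $H^*_{\rm GS}(B,k)=H^*_b(B)$, while $X\otimes L^*=L^*$, giving $H^*_b(B)\simeq H^*_{\rm GS}(A,L^*)$. The main obstacle I anticipate is the middle step, namely rigorously establishing the rigidity-based duality isomorphism at the level of derived functors rather than just on $\Hom$; everything else is either a direct application of the quoted adjunction or a verification of standard flatness/coflatness hypotheses for exact sequences with cosemisimple quotient.
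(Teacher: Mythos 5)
Your proposal is correct and follows essentially the same route as the paper: verify the coflatness/faithful-flatness hypotheses (via cosemisimplicity of $L$ and Takeuchi's theorem), apply Proposition \ref{prop:adjyd} with $V=k$, identify $k\otimes_B A\simeq L$ by Remark \ref{rem:indtriv}, and finish with the adjunction between the exact functors $-\otimes L$ and $-\otimes L^*$ coming from finite-dimensionality (rigidity) of $L$. The only slight inaccuracy is attributing the preservation of resolutions in the last step to cosemisimplicity of $L$ — it is the exactness of the biadjoint tensor functors, i.e.\ rigidity alone, that makes the adjunction descend to $\Ext$; cosemisimplicity is needed only for the flatness/coflatness hypotheses.
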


\begin{proof}
  Since $L=A/B^+A$ is cosemisimple, $B\subset A$ is coflat \cite[Proposition 3.4]{bi18}. Moreover, still because $L$ is cosemisimple, the quotient map $A\to L$ is faithfully coflat, and hence $A$ is (faithfully) flat as a $B$-module by the left version of \cite[Theorem 2]{tak}. Hence we can use Proposition \ref{prop:adjyd}, applied to  $V=k$ to get 
$${\rm Ext}_{\yd_A^A}^*(k \otimes_B A, X) \simeq {\rm Ext}_{\yd_B^B}^*(k, X^{(B)})$$ 
and hence, by Remark \ref{rem:indtriv}, 
$${\rm Ext}_{\yd_A^A}^*(L, X) \simeq {\rm Ext}_{\yd_B^B}^*(k, X^{(B)})$$ 
Since $L$ is assumed to be finite-dimensional, the usual adjunction between the exact functors $-\otimes L$ and $-\otimes L^*$ provides the announced isomorphism.
\end{proof}


\begin{corollary}\label{cor:hgssub}
Let $k\to B \to A \to k\Gamma \to k$ be a cocentral exact sequence of Hopf algebras. If $\Gamma$ is a finite abelian group with $|\Gamma|\not = 0$ in $k$, then we have, for any $X \in \yd_A^A$, 
$$H^*_{\rm GS}(B, X^{(B)}) \simeq \bigoplus_{\psi \in \widehat{\Gamma}} H_{\rm GS}^*(A, X\otimes k_{\psi})$$
and hence in particular
$$H^*_b(B) \simeq \bigoplus_{\psi \in \widehat{\Gamma}} H_{\rm GS}^*(A, k_{\psi})$$
\end{corollary}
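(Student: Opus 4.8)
The plan is to deduce Corollary \ref{cor:hgssub} from Theorem \ref{thm:hgssubL} by combining it with the Fourier decomposition of Lemma \ref{lemm}. The key observation is that in the cocentral exact sequence $k\to B \to A \to k\Gamma \to k$, the third factor $L = k\Gamma$ is automatically finite-dimensional (as $\Gamma$ is finite) and cosemisimple (since $|\Gamma|\neq 0$ in $k$ makes $k\Gamma$ a semisimple, hence cosemisimple, Hopf algebra). Thus the hypotheses of Theorem \ref{thm:hgssubL} are satisfied, and we immediately obtain
$$H^*_{\rm GS}(B, X^{(B)}) \simeq H_{\rm GS}^*(A, X\otimes L^*) = H_{\rm GS}^*(A, X\otimes (k\Gamma)^*).$$

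The next step is to identify the dual Yetter-Drinfeld module $(k\Gamma)^*$. Since $p$ is cocentral, Lemma \ref{lemm} applies and gives an isomorphism $k\Gamma \simeq \bigoplus_{\psi\in\widehat\Gamma} k_\psi$ in $\yd_A^A$. Dualizing a finite direct sum commutes with the dual, so $(k\Gamma)^* \simeq \bigoplus_{\psi\in\widehat\Gamma} k_\psi^*$. Here I would check that each one-dimensional module $k_\psi^*$ is itself of the form $k_{\psi'}$ for some character $\psi'$; since the comodule structure on each $k_\psi$ is trivial and the module structure is given by the algebra map $\psi$, the dual $k_\psi^*$ carries the trivial comodule structure and the module structure given by $\psi\circ S$, which is again a character of the required type in Example \ref{ex:yd1d}. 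As $\psi\mapsto \psi\circ S$ permutes $\widehat\Gamma$ (it is inversion on the group of characters), reindexing the sum shows $\bigoplus_\psi k_\psi^* \simeq \bigoplus_\psi k_\psi$ as Yetter-Drinfeld modules.

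Finally I would substitute this decomposition into the isomorphism from Theorem \ref{thm:hgssubL}. Using that the tensor product $X\otimes -$ distributes over the finite direct sum and that $\Ext$ (equivalently $H_{\rm GS}$) sends finite direct sums in the coefficient variable to direct sums, we get
$$H_{\rm GS}^*(A, X\otimes (k\Gamma)^*) \simeq H_{\rm GS}^*\Bigl(A, \bigoplus_{\psi\in\widehat\Gamma} X\otimes k_\psi\Bigr) \simeq \bigoplus_{\psi\in\widehat\Gamma} H_{\rm GS}^*(A, X\otimes k_\psi),$$
which is exactly the asserted formula; taking $X=k$ and noting $k^{(B)}=k$ yields the bialgebra cohomology statement $H^*_b(B)\simeq \bigoplus_\psi H_{\rm GS}^*(A,k_\psi)$.

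The only genuinely delicate point is the bookkeeping in the second step: verifying that the Fourier isomorphism of Lemma \ref{lemm} is compatible with dualization and that the reindexing by $\psi\mapsto\psi\circ S$ is harmless. Everything else is a formal consequence of the theorem together with the additivity of $\Ext$ in its coefficient argument, so I expect no real obstacle beyond this routine identification.
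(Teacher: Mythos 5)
Your proof is correct and takes essentially the same route as the paper: apply Theorem \ref{thm:hgssubL} with $L=k\Gamma$, identify $L\simeq\bigoplus_{\psi\in\widehat{\Gamma}}k_\psi\simeq L^*$ via Lemma \ref{lemm} (your bookkeeping $k_\psi^*\simeq k_{\psi\circ S}$ with reindexing by inversion is precisely the implicit content of the paper's terse ``hence in particular $L\simeq L^*$''), and conclude by additivity of $\Ext$ in the coefficient variable. One correction, though: the implication ``semisimple, hence cosemisimple'' that you invoke to verify the hypotheses of Theorem \ref{thm:hgssubL} is false for Hopf algebras in positive characteristic (for instance $k^\Gamma$ is semisimple but not cosemisimple when $\mathrm{char}(k)$ divides $|\Gamma|$); what saves you is that a group algebra $k\Gamma$ is \emph{always} cosemisimple, being the direct sum of its one-dimensional subcoalgebras $kg$, $g\in\Gamma$, so the theorem applies with no condition on $|\Gamma|$, and the assumption $|\Gamma|\neq 0$ is needed only for the Fourier decomposition of Lemma \ref{lemm}.
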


\begin{proof}
We are in the situation of Theorem \ref{thm:hgssubL}, hence
$$H^*_{\rm GS}(B, X^{(B)}) \simeq  H_{\rm GS}^*(A, X\otimes L^*)$$
for $L=k\Gamma$. The assumption on $\Gamma$, ensures, by Lemma \ref{lemm}, that $L\simeq   \oplus_{\psi \in \widehat{\Gamma}}k_\psi$ as Yetter-Drinfeld modules over $A$, and hence in particular $L\simeq L^*$. The statement follows.
\end{proof}

\begin{remark}
Recall that the Gerstenhaber-Schack cohomological dimension of a Hopf  algebra $A$ is
defined by
$${\rm cd}_{\rm GS}(A)= {\rm sup}\{n : \ H_{\rm GS}^n(A, V) \not=0 \ {\rm for} \ {\rm some} \ V \in \yd_A^A\}\in \mathbb N \cup \{\infty\}$$  
 Let $k\to B \to A \to k\Gamma \to k$ be a cocentral exact sequence with $\Gamma$ a finite abelian group such $|\Gamma|\not = 0$. Then it follows from Corollary \ref{cor:hgssub} that $\cd_{\rm GS}(B)\geq \cd_{\rm GS}(A)$. If $A$ is cosemisimple, then $\cd_{\rm GS}(B)=\cd_{\rm GS}(A)$ by \cite[Theorem 4.9]{bi18}. We expect that equality holds in general. 
\end{remark}


\section{Application to the bialgebra cohomology of universal cosovereign Hopf algebras}

\subsection{Universal cosovereign Hopf algebras}
Recall that for $n \geq 2$ and  $F \in {\rm GL}_n(k)$,  the universal cosovereign Hopf algebra $H(F)$ is the algebra
presented by generators
$(u_{ij})_{1 \leq i,j \leq n}$ and
$(v_{ij})_{1 \leq i,j \leq n}$, and relations:
$$ {u} {v^t} = { v^t} u = I_n ; \quad {vF} {u^t} F^{-1} = 
{F} {u^t} F^{-1}v = I_n,$$
where $u= (u_{ij})$, $v = (v_{ij})$ and $I_n$ is
the identity $n \times n$ matrix. The algebra
$H(F)$ has a  Hopf algebra structure
defined by
\begin{gather*}
\Delta(u_{ij}) = \sum_k u_{ik} \otimes u_{kj}, \quad
\Delta(v_{ij}) = \sum_k v_{ik} \otimes v_{kj}, \\
\varepsilon (u_{ij}) = \varepsilon (v_{ij}) = \delta_{ij}, \quad 
S(u) = {v^t}, \quad S(v) = F { u^t} F^{-1}.
\end{gather*}
We refer the reader to \cite{bi07,bi18} for more information and background on the  Hopf algebras $H(F)$. 
Recall from \cite{bi07} that a matrix $F \in \GL_n(k)$ is said to be

$\bullet$ normalizable if ${\rm tr}(F) \not= 0$ and  $ {\rm tr} (F^{-1})\not=0$ or ${\rm tr}(F)=0={\rm tr} (F^{-1})$;

$\bullet$ generic if it is normalizable and the solutions of the equation
$q^2 -\sqrt{{\rm tr}(F){\rm tr}(F^{-1})}q +1 = 0$ are generic, i.e. are not roots of unity of order $\geq 3$ (this property does not depend on the choice of the above square root);

$\bullet$ an asymmetry if there exists $E \in {\rm GL}_n(k)$ such that $F=E^tE^{-1}$.


\subsection{Hopf algebras of bilinear forms} Let $E \in {\rm GL}_n(k)$. The Hopf algebra  $\mathcal B(E)$ defined by Dubois-Violette and Launer \cite{dvl} is presented by generators $a_{ij}$, $1 \leq i,j \leq n$, and relations
$E^{-1}a^tEa=I_n=aE^{-1}a^tE$, where $a$ is the matrix $(a_{ij})$. The Hopf algebra structure is given by 
$$\Delta(a_{ij}) = \sum_k a_{ik} \otimes a_{kj}, \quad \varepsilon (a_{ij}) = \delta_{ij}, \quad S(a) = E^{-1} { a^t} E$$

For an appropriate matrix $E_q$, one has  $\mathcal B(E_q)=\mathcal O_q({\rm SL}_2(k))$, the coordinate algebra on quantum ${\rm SL}_2$. 
The Hopf algebra $\mathcal B(E)$ is cosemisimple if and only if $F=E^tE^{-1}$ is generic in the sense of the previous subsection: this follows from \cite{bi03} and the classical result for $\mathcal O_q({\rm SL}_2(k))$.

Denote by $\mathcal B_+(E)$ the subalgebra of $\mathcal B(E)$ generated by the products $a_{ij}a_{kl}$, $1\leq i,j,k,l\leq n$. This is a Hopf subalgebra of $\mathcal B(E)$, that fits into a cocentral exact sequence 
$$k\to \mathcal B_+(E) \to \mathcal B(E) \to k\mathbb Z_2 \to k$$
where the projection on the right is given by $p(a_{ij}) =\delta_{ij}g$, with $g$ being the generator of the cyclic group $\mathbb Z_2$. By Example \ref{ex:ydquot}, $k\mathbb Z_2$ inherits a Yetter-Drinfeld module structure over $\mathcal B(E)$, whose module structure is induced by $p$, and comodule structure is trivial.

The bialgebra cohomology of $\mathcal B(E)$ was computed in the cosemisimple case in \cite[Theorem 6.5]{bic}  with $\mathbb C$ as a base field.
We record and supplement the result here, taking care of the characteristic of the base field, together with another computation of Gerstenhaber-Schack cohomology, with coefficients in $k\mathbb Z_2$.

\begin{theorem}\label{thm:hbbe}
	Let $E \in {\rm GL}_n(k)$, $n\geq 2$, and assume that $E^tE^{-1}$ is generic.
	\begin{enumerate}
		\item We have $$ H_{\rm GS}^p(\mathcal B (E),k\mathbb Z_2) \simeq \begin{cases}
		k \ \text{if} \ p=0,3 \\
		\{0\} \ \text{otherwise} 
		\end{cases}$$
		\item If ${\rm char}(k)\not=2$, then $$H_b^p(\mathcal B (E))\simeq \begin{cases}
		k \ \text{if} \ p=0,3 \\
		\{0\} \ \text{otherwise} 
		\end{cases}$$
		\item If ${\rm char}(k)=2$, then $$H_b^p(\mathcal B (E))\simeq \begin{cases}
		k \ \text{if} \ p=0,1,2,3 \\
		\{0\} \ \text{otherwise} 
		\end{cases}$$
	\end{enumerate}
\end{theorem}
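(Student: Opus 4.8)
The plan is to deduce all three statements from a single projective resolution of the trivial Yetter-Drinfeld module. Since $E^tE^{-1}$ is generic, $\mathcal B(E)$ is cosemisimple, so $\yd_{\mathcal B(E)}^{\mathcal B(E)}$ has enough projectives and $H_{\rm GS}^*(\mathcal B(E),V)=\ext_{\yd_{\mathcal B(E)}^{\mathcal B(E)}}^*(k,V)$ may be computed from any projective resolution of $k$. I would start from the explicit minimal resolution of $k$ by free Yetter-Drinfeld modules of length $3$ constructed in \cite{bic}, of the form $0\to M_3\otimes\mathcal B(E)\to M_2\otimes\mathcal B(E)\to M_1\otimes\mathcal B(E)\to\mathcal B(E)\to k\to0$, with the $M_i$ finite-dimensional and $M_0=M_3=k$. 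The crucial observation is that this resolution and its differentials are defined over an arbitrary base field, so that only the extraction of cohomology at the very end is sensitive to ${\rm char}(k)$.

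The first step is to make the terms of the resulting cochain complex explicit. Applying Proposition \ref{prop:adjyd} with $B=k$ (so that $\yd_k^k$ is the category of vector spaces and $X^{(k)}=X^{\co\mathcal B(E)}$), one obtains $\Hom_{\yd_{\mathcal B(E)}^{\mathcal B(E)}}(M\otimes\mathcal B(E),V)\simeq\Hom_k(M,V^{\co\mathcal B(E)})$, so that $\Hom_{\yd_{\mathcal B(E)}^{\mathcal B(E)}}(-,V)$ turns the resolution into a finite complex of finite-dimensional vector spaces computing $H_{\rm GS}^*(\mathcal B(E),V)$; the induced differentials are obtained by letting $V$ act, through its module structure, on the $\mathcal B(E)$-valued matrix coefficients of the differentials of \cite{bic}. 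For item $(1)$, where $V=k\mathbb Z_2$ carries the trivial comodule structure (by cocentrality, see Example \ref{ex:ydquot}) and the module structure induced by $p$, one has $V^{\co\mathcal B(E)}=k\mathbb Z_2$, and I would write out the four-term complex and check that its middle cohomology vanishes in every characteristic, leaving $k$ in degrees $0$ and $3$. As an independent check, Theorem \ref{thm:hgssubL} applied with $X=k$ gives $H_{\rm GS}^*(\mathcal B(E),k\mathbb Z_2)\simeq H_b^*(\mathcal B_+(E))$, using that $k\mathbb Z_2$ is cosemisimple as a coalgebra in every characteristic and that $k\mathbb Z_2\simeq(k\mathbb Z_2)^*$ in $\yd_{\mathcal B(E)}^{\mathcal B(E)}$.

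For items $(2)$ and $(3)$ I would take $V=k$, so that $V^{\co\mathcal B(E)}=k$ and the complex becomes $k\xrightarrow{\,0\,}M_1^*\xrightarrow{\,\partial\,}M_2^*\xrightarrow{\,0\,}k$, the outer maps vanishing because $H^0=H^3=k$. Here the module action is the counit, and tracking the numerical coefficients in the differentials of \cite{bic} the map $\partial$ should have Smith normal form with a single non-unit entry equal to $2$, the factor $2$ reflecting the $\mathbb Z_2$-grading of $\mathcal B(E)$ underlying the cocentral exact sequence. When ${\rm char}(k)\neq2$ this makes $\partial$ an isomorphism, the middle cohomology vanishes, and one recovers $k$ in degrees $0,3$, reproving \cite[Theorem 6.5]{bic} in arbitrary characteristic distinct from $2$; when ${\rm char}(k)=2$ the map $\partial$ acquires a one-dimensional kernel and cokernel, producing extra classes and hence $k$ in degrees $0,1,2,3$.

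The hard part will be the explicit bookkeeping of these differentials: one must confirm that \cite{bic}'s resolution is genuinely characteristic-free, and then pin down that the differential for $V=k$ is multiplication by $2$ on a one-dimensional factor, whereas for $V=k\mathbb Z_2$ the analogous differential stays nonsingular in all characteristics. The dichotomy between $(2)$ and $(3)$ can also be understood conceptually through the long exact sequence attached to the non-split extension $0\to k\to k\mathbb Z_2\to k\to0$ of Yetter-Drinfeld modules in characteristic $2$, whose connecting (Bockstein) maps are isomorphisms in degrees $0,1,2$; this viewpoint simultaneously explains the vanishing of the middle cohomology in $(1)$ and the appearance of the extra classes in $(3)$.
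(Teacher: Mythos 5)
Your overall strategy coincides with the paper's own proof, which consists precisely of observing that the free Yetter--Drinfeld resolution of the counit from \cite[Theorem 5.1]{bic} is valid over any field, that its terms are projective in $\yd_{\mathcal B(E)}^{\mathcal B(E)}$ because genericity of $E^tE^{-1}$ makes $\mathcal B(E)$ cosemisimple, and that the cohomology of the resulting Hom complex is then a direct computation whose outcome depends on whether ${\rm char}(k)=2$. The problem is in your mechanism for turning the resolution into an explicit complex, and it is a genuine error, not a cosmetic one.

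The terms of Bichon's resolution are \emph{free Yetter--Drinfeld modules on comodules}: they are of the form $M_i\otimes\mathcal B(E)$ with action by right multiplication and coaction $m\otimes a\mapsto m_{(0)}\otimes a_{(2)}\otimes S(a_{(1)})m_{(1)}a_{(3)}$, where $M_1=M_2=V\otimes V$ carries the \emph{nontrivial} coaction given by the coefficients $a_{ij}a_{kl}$ ($V$ being the fundamental $n$-dimensional comodule). These are not the Yetter--Drinfeld modules induced from vector spaces along $k\subset\mathcal B(E)$ -- those are the free modules on \emph{trivial} comodules -- so Proposition \ref{prop:adjyd} with $B=k$ does not apply to them. (A quick counterexample to the identification you use: for $A=k\Gamma$ with $\Gamma$ abelian and $g\neq e$, the free Yetter--Drinfeld module $k_g\otimes A$ on the comodule $k_g$ has zero coinvariants, while the induced module $k\otimes_k A$ is a trivial comodule; they are not isomorphic, although both are free as modules.) The correct identification, which is the one used in \cite{bic} and \cite{bi16}, is $\Hom_{\yd_{\mathcal B(E)}^{\mathcal B(E)}}(M_i\otimes\mathcal B(E),X)\simeq\Hom^{\mathcal B(E)}(M_i,X)$, the space of \emph{comodule} maps. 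For $X=k$ or $X=k\mathbb Z_2$ (both trivial comodules), cosemisimplicity and the $\SL_2$-type corepresentation theory of $\mathcal B(E)$ give $\dim\Hom^{\mathcal B(E)}(V\otimes V,k)=1$, so the complex computing $H^*_b(\mathcal B(E))$ is $0\to k\to k\to k\to k\to 0$, not $0\to k\to k^{n^2}\to k^{n^2}\to k\to 0$: your Smith-normal-form analysis concerns a complex whose cohomology is not the Gerstenhaber--Schack cohomology. (Your instinct about the source of the dichotomy is nevertheless right: with the correct complex the middle differential is a single scalar which is, up to a unit, equal to $2$.) Two smaller points: the vanishing of the outer differentials should be verified on the resolution rather than inferred from ``$H^0=H^3=k$'', which is circular; and your ``independent check'' of part (1) via Theorem \ref{thm:hgssubL} is also circular within this paper, since the paper deduces $H_b^*(\mathcal B_+(E))$ from part (1) and not conversely.
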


\begin{proof}
The resolution given in \cite[Theorem 5.1]{bic} is valid over any field, and can be used to compute the above cohomologies, since the involved Yetter-Drinfeld modules are free, and hence projective by the cosemisimplicity assumption on $\mathcal B(E)$.
The result is then obtained by direct computations, which depend on whether $k$ has, or not, characteristic $2$.
\end{proof}

As a first application of the results of Section 3, we recover in a shorter way the bialgebra cohomology computation of $\mathcal B_+(E)$ in the cosemisimple case \cite[Theorem 6.4]{bi16}, that we supplement in the characteristic $2$ case.

\begin{corollary}
Let $E \in {\rm GL}_n(k)$, $n\geq 2$, and assume that $E^tE^{-1}$ is generic. We have 
$$H_b^p(\mathcal B_+ (E))\simeq \begin{cases}
k \ \text{if} \ p=0,3 \\
\{0\} \ \text{otherwise} 
\end{cases}$$
\end{corollary}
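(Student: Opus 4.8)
The plan is to apply Theorem \ref{thm:hgssubL} to the cocentral exact sequence
$$k \to \mathcal{B}_+(E) \to \mathcal{B}(E) \to k\mathbb{Z}_2 \to k$$
recorded above, taking $A = \mathcal{B}(E)$, $B = \mathcal{B}_+(E)$ and $L = k\mathbb{Z}_2$. The decisive feature is that $k\mathbb{Z}_2$ is finite-dimensional and cosemisimple \emph{as a coalgebra in every characteristic} (its comodules are simply $\mathbb{Z}_2$-graded vector spaces), so the hypotheses of Theorem \ref{thm:hgssubL} are satisfied with no restriction on $\mathrm{char}(k)$. This immediately gives
$$H_b^*(\mathcal{B}_+(E)) \simeq H_{\rm GS}^*(\mathcal{B}(E), (k\mathbb{Z}_2)^*).$$

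Next I would identify the coefficient module $(k\mathbb{Z}_2)^*$ with $k\mathbb{Z}_2$ in $\yd_{\mathcal{B}(E)}^{\mathcal{B}(E)}$. Since the $\mathcal{B}(E)$-comodule structure on $k\mathbb{Z}_2$ is trivial (by cocentrality of $p$, as recorded before Theorem \ref{thm:hbbe}), so is the one on the dual, and only the module structures need comparison. As an $A$-module $k\mathbb{Z}_2$ is the right regular $k\mathbb{Z}_2$-module pulled back along $p$, so using the convention $(f\cdot a)(\ell)=f(\ell\cdot S(a))$ and the fact that $p$ is a Hopf map one gets $\ell\cdot S(a)=\ell\cdot S_{k\mathbb{Z}_2}(p(a))$; the key point is that the antipode of $k\mathbb{Z}_2$ is the identity, because $g=g^{-1}$. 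Hence the dual module structure is again the pulled-back regular one, and $e\mapsto e^*$, $g\mapsto g^*$ defines an isomorphism $(k\mathbb{Z}_2)^*\simeq k\mathbb{Z}_2$ of Yetter-Drinfeld modules, valid in all characteristics. (In characteristic $\neq 2$ this also drops out of Lemma \ref{lemm}, since $k\mathbb{Z}_2\simeq\bigoplus_\psi k_\psi$ with every character of $\mathbb{Z}_2$ self-inverse, whence $L^*\simeq L$.)

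Combining the two steps yields
$$H_b^*(\mathcal{B}_+(E)) \simeq H_{\rm GS}^*(\mathcal{B}(E), k\mathbb{Z}_2),$$
and the right-hand side is computed by part (1) of Theorem \ref{thm:hbbe}, whose statement is uniform in the characteristic and gives $k$ in degrees $0$ and $3$ and $\{0\}$ otherwise. This is exactly the asserted answer, completing the argument.

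The step demanding the most care is the identification $(k\mathbb{Z}_2)^*\simeq k\mathbb{Z}_2$ in characteristic $2$: there Lemma \ref{lemm} is unavailable (the Fourier transform requires $|\mathbb{Z}_2|\neq 0$) and $k\mathbb{Z}_2$ is no longer a direct sum of one-dimensional Yetter-Drinfeld modules, so one cannot run the proof through Corollary \ref{cor:hgssub}. Routing everything through Theorem \ref{thm:hgssubL} with coefficients $(k\mathbb{Z}_2)^*$ is therefore essential, and this is consistent with Theorem \ref{thm:hbbe}(3): in characteristic $2$ the cohomology $H_b^*(\mathcal{B}(E))$ is strictly larger than $H_b^*(\mathcal{B}_+(E))$, so reducing to trivial coefficients would give the wrong answer.
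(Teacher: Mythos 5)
Your proof is correct and follows essentially the same route as the paper's own: apply Theorem \ref{thm:hgssubL} to the cocentral exact sequence $k\to \mathcal B_+(E)\to \mathcal B(E)\to k\mathbb Z_2\to k$, identify $(k\mathbb Z_2)^*\simeq k\mathbb Z_2$ in $\yd_{\mathcal B(E)}^{\mathcal B(E)}$, and invoke part (1) of Theorem \ref{thm:hbbe}. The paper merely asserts the self-duality of $k\mathbb Z_2$ without proof, so your explicit verification (valid in every characteristic, including characteristic $2$ where Lemma \ref{lemm} and Corollary \ref{cor:hgssub} are unavailable) just fills in that detail.
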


\begin{proof}
	The Yetter-Drinfeld module $k\mathbb Z_2$ is self dual, hence  the result is the combination of the first part of Theorem \ref{thm:hbbe}  and of Theorem \ref{thm:hgssubL}, 
\end{proof}


\subsection{Relation between $H(F)$ and $\mathcal B(E)$}
The first relation between $H(F)$ and $\mathcal B(E)$ was observed by Banica in \cite{ba97}, when $F=E^tE^{-1}\in {\rm GL}_n(\mathbb C)$ is positive matrix, and a key result from \cite{ba97} in that case is the existence of  a Hopf algebra embedding
\begin{equation}\label{banembedd}
H(F)\hookrightarrow \mathcal B(E)*\mathbb C \mathbb Z \end{equation}
which, according to \cite[Proposition 6.20]{tw}, can be refined to an embedding 
\begin{equation}\label{twembedd}
H(F)\hookrightarrow \mathcal B(E)*\mathbb C \mathbb Z_2 \end{equation}
This is strengthened in \cite[Theorem 4.11]{bfgkt}, where it is shown that the embedding is still valid for any generic asymmetry $F$.

In fact, there is a simple proof of this result,   valid  over any field $k$ and any asymmetry $F=E^tE^{-1}$.

\begin{proposition}\label{prop:iso}
 Let $E \in {\rm GL}_n(k)$ and let $F = E^tE^{-1}$. There exists a $\mathbb Z_2$-action on $H(F)$ such one gets a Hopf algebra isomorphism
$$H(F)\rtimes k\mathbb{Z}_2 \simeq \mathcal{B}(E)*k\mathbb{Z}_2$$
\end{proposition}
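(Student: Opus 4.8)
The plan is to exhibit two mutually inverse Hopf algebra maps between $H(F)\rtimes k\mathbb{Z}_2$ and $\mathcal B(E)*k\mathbb{Z}_2$, using the universal property of the free product on one side and the universal presentation of $H(F)$ on the other. Throughout, denote by $g$ the canonical grouplike generator of the free factor $k\mathbb{Z}_2$, and by $\bar g$ the grouplike element $1\rtimes g$ of the smash product; recall also that $a^{-1}=S(a)=E^{-1}a^tE$ in $\mathcal B(E)$, that $S(u)=v^t$, $S(v)=Fu^tF^{-1}$ in $H(F)$, and that $F=E^tE^{-1}$ gives the identity $E(E^{t})^{-1}=F^{-1}$, which is the glue making all the $E$'s and $F$'s match.

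First I would pin down the $\mathbb{Z}_2$-action. The natural candidate is the algebra map $\theta\colon H(F)\to H(F)$ determined on generators by $\theta(u)=(E^{t})^{-1}vE^{t}$ and $\theta(v)=E^{t}u(E^{t})^{-1}$. One checks that $\theta$ preserves the defining relations of $H(F)$: for instance, since $\theta(v)^t=E^{-1}u^tE$, one gets $\theta(u)\theta(v)^t=(E^{t})^{-1}v(E^{t}E^{-1})u^tE=(E^{t})^{-1}(vFu^t)E$, which equals $I_n$ precisely because of the relation $vFu^tF^{-1}=I_n$; the three remaining relations are handled symmetrically, each being traded for another through $F=E^tE^{-1}$. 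Since $\theta$ sends multiplicative matrices to multiplicative matrices and preserves counits it is a coalgebra map, and $\theta^2=\id$ is immediate from the formulas; hence $\theta$ is a Hopf algebra involution and $H(F)\rtimes k\mathbb{Z}_2$ is a well-defined Hopf algebra.

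Next I would build $\Phi\colon \mathcal B(E)*k\mathbb{Z}_2 \to H(F)\rtimes k\mathbb{Z}_2$. By the universal property of the free product it suffices to give a Hopf map on each factor: send $g\mapsto \bar g$ and send the matrix $a$ to $A:=u\bar g$. The entries $u_{ij}\bar g$ form a multiplicative matrix with counit $I_n$, so the only real point is the Dubois--Violette--Launer relations. Using $A^t=u^t\bar g$, $\bar g^2=1$ and $\bar g(-)\bar g=\theta(-)$, a short computation gives $E^{-1}A^tEA=E^{-1}u^tE\,\theta(u)=E^{-1}u^tF^{-1}vE^{t}$, which equals $I_n$ exactly because of $Fu^tF^{-1}v=I_n$; likewise $AE^{-1}A^tE=uE^{-1}\theta(u)^tE=uv^t=I_n$. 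In the reverse direction I would define $\Psi\colon H(F)\rtimes k\mathbb{Z}_2 \to \mathcal B(E)*k\mathbb{Z}_2$ by $u\mapsto ag$, $v\mapsto E^{t}(ga)(E^{t})^{-1}$ and $\bar g\mapsto g$, the formula for $v$ being forced by $\Psi(v)^t=S(\Psi(u))=gE^{-1}a^tE$. Verifying that these images satisfy all relations of $H(F)$ is the analogous computation, and one must also check the compatibility $\Psi(\theta(h))=g\,\Psi(h)\,g^{-1}$ ensuring that $\Psi$ respects the smash-product structure; for $h=u$ both sides equal $ga$.

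Finally, $\Phi$ and $\Psi$ are mutually inverse, which I would verify on generators: $\Phi\Psi(u)=\Phi(ag)=u\bar g\cdot\bar g=u$, $\Phi\Psi(v)=E^{t}(\bar g u\bar g)(E^{t})^{-1}=E^{t}\theta(u)(E^{t})^{-1}=v$, $\Phi\Psi(\bar g)=\bar g$, and symmetrically $\Psi\Phi(a)=\Psi(u\bar g)=ag\cdot g=a$ and $\Psi\Phi(g)=g$. The main obstacle is purely the bookkeeping in the two relation checks: one has to track the non-commutation of $g$ with the $a_{ij}$ (equivalently, of $\bar g$ with $H(F)$) and repeatedly use $F=E^tE^{-1}$ to convert between the $E$-conjugations appearing in $\theta$, in the antipode $S(a)=E^{-1}a^tE$ of $\mathcal B(E)$, and in the defining relations of $H(F)$. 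Once the action $\theta$ is correctly identified, everything else is a formal consequence of the universal properties.
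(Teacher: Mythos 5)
Your proposal is correct and follows essentially the same route as the paper: the same order-two automorphism $\tau(u)=(E^t)^{-1}vE^t$, $\tau(v)=E^tu(E^t)^{-1}$ (which the paper quotes from the reference \cite{bny}), the same pair of mutually inverse maps $u,v,g\mapsto ag,\ E^tga(E^t)^{-1},\ g$ and $a,g\mapsto u\bar g,\ \bar g$, and verification on generators. The only difference is that you carry out the relation checks that the paper explicitly leaves to the reader, and your computations are accurate.
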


\begin{proof}
The announced $\mathbb Z_2$-action, from \cite[Example 2.18]{bny}, is provided by the order $2$ Hopf algebra automorphism of $H(F)$ given in matrix form as follows 
$$\tau(u)= (E^t)^{-1} v E^t, \quad \tau(v) =E^t u (E^t)^{-1}$$
We therefore form the usual crossed product Hopf algebra $H(F)\rtimes k\mathbb{Z}_2$. Denoting by $g$ the generator of $\mathbb Z_2$, it is a straightforward verification to check the existence of a Hopf algebra map, written in matrix form
\begin{align*}
H(F)\rtimes k\mathbb{Z}_2 &\longrightarrow \mathcal{B}(E)*k\mathbb{Z}_2 \\
u, \ v , \ g &\longmapsto ag, \ E^t ga (E^t)^{-1}, \ g 
\end{align*}
Similarly, it is straightforward to construct an inverse isomorphism
\begin{align*}
\mathcal{B}(E)*k\mathbb{Z}_2  &\longrightarrow  H(F)\rtimes k\mathbb{Z}_2 \\
a, \ g &\longmapsto  \ ug, \ g 
\end{align*}
We leave the detailed verification to the reader.
\end{proof}

\subsection{Bialgebra cohomology of $H(F)$ in the generic case}

\begin{theorem}\label{thm:hbhf}
Let $F \in {\rm GL}_n(k)$, $n\geq 2$, with $F$ generic. The bialgebra cohomology of $H(F)$ is
$$H_b^p(H(F))\simeq \begin{cases}
k \ \text{if} \ p=0,1,3 \\
\{0\} \ \text{otherwise} 
\end{cases}$$
\end{theorem}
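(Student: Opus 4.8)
The plan is to exploit the isomorphism $H(F)\rtimes k\mathbb{Z}_2 \simeq \mathcal{B}(E)*k\mathbb{Z}_2$ from Proposition \ref{prop:iso}, since $F$ generic means $F = E^tE^{-1}$ is generic and $\mathcal{B}(E)$ is cosemisimple. The key observation is that the free product $\mathcal{B}(E)*k\mathbb{Z}_2$ fits into a cocentral exact sequence whose kernel is related to $H(F)$. First I would set $A = \mathcal{B}(E)*k\mathbb{Z}_2 \simeq H(F)\rtimes k\mathbb{Z}_2$ and exhibit a cocentral Hopf algebra surjection $p : A \to k\mathbb{Z}_2$ coming from the crossed product structure (sending $H(F)$ to $1$ and the group generator $g$ to $g$). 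The kernel of this surjection should be precisely $H(F)$, giving a cocentral exact sequence
$$k \to H(F) \to H(F)\rtimes k\mathbb{Z}_2 \to k\mathbb{Z}_2 \to k.$$

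Next I would apply Corollary \ref{cor:hgssub} with $B = H(F)$, $A = H(F)\rtimes k\mathbb{Z}_2$, and $\Gamma = \mathbb{Z}_2$. This requires $|\mathbb{Z}_2| = 2 \not= 0$ in $k$, so this branch of the argument works in characteristic $\not= 2$; the characteristic $2$ case would need separate treatment, and I expect this to be the main obstacle. In good characteristic, the corollary gives
$$H_b^*(H(F)) \simeq \bigoplus_{\psi \in \widehat{\mathbb{Z}_2}} H_{\rm GS}^*(A, k_\psi).$$
The two characters $\psi$ of $\mathbb{Z}_2$ are the trivial one and the sign character $\eps$, so the right-hand side is $H_b^*(A) \oplus H_{\rm GS}^*(A, k_\eps)$.

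The remaining task is to compute these two Gerstenhaber-Schack cohomologies of $A = \mathcal{B}(E)*k\mathbb{Z}_2$. For this I would use the behavior of Gerstenhaber-Schack cohomology under free products: there should be a Künneth- or Mayer-Vietoris-type formula expressing $H_{\rm GS}^*(\mathcal{B}(E)*k\mathbb{Z}_2, -)$ in terms of the cohomologies of the factors $\mathcal{B}(E)$ and $k\mathbb{Z}_2$. Since $k\mathbb{Z}_2$ is the group algebra of a finite group of invertible order, it is cosemisimple with trivial higher cohomology, so the free product cohomology should reduce essentially to that of $\mathcal{B}(E)$. Feeding in Theorem \ref{thm:hbbe}, whose part (1) computes $H_{\rm GS}^*(\mathcal{B}(E), k\mathbb{Z}_2)$ and part (2) computes $H_b^*(\mathcal{B}(E))$, concentrated in degrees $0$ and $3$, I would assemble the contributions. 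The shift producing the extra class in degree $1$ (accounting for the $p=1$ term in the final answer) should come from the degree-one cohomology of $\mathbb{Z}_2$ coupled into the free product, or equivalently from the $k_\eps$-coefficient summand detecting the nontrivial grading.

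The hard part will be twofold: first, making precise the free product formula for Gerstenhaber-Schack cohomology and verifying its hypotheses (flatness, coflatness, cosemisimplicity of the relevant subalgebras) so that the reduction to the factors is legitimate; and second, handling characteristic $2$, where Corollary \ref{cor:hgssub} is unavailable and one must argue directly, presumably via the resolution of \cite{bic} applied to $H(F)$ through the isomorphism of Proposition \ref{prop:iso}, tracking the extra cohomology classes that appear (as already visible in the discrepancy between parts (2) and (3) of Theorem \ref{thm:hbbe}). I would expect the final degrees $0, 1, 3$ to emerge uniformly, but the intermediate bookkeeping of which summand contributes each class is where the care is needed.
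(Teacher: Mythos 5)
Your proposal follows the paper's strategy in outline (the crossed product/free product isomorphism, the cocentral exact sequence $k\to H(F)\to H(F)\rtimes k\mathbb{Z}_2\to k\mathbb{Z}_2\to k$, reduction via the results of Section 3, then a free product formula plus Theorem \ref{thm:hbbe}), but it has a genuine gap and two structural defects. The gap: you assume throughout that $F=E^tE^{-1}$, i.e.\ that $F$ is an asymmetry. ``Generic'' and ``asymmetry'' are independent conditions on $F$, so Proposition \ref{prop:iso} is simply unavailable for a general generic $F$, and your argument proves the theorem only for generic asymmetries. The paper closes this by a separate final step: for an arbitrary generic $F$, \cite[Theorem 1.1]{bi07} produces an asymmetry $F(q)\in\GL_2(k)$ whose Hopf algebra has comodule category monoidally equivalent to that of $H(F)$, and bialgebra cohomology is a monoidal invariant (\cite[Theorem 7.10]{bic14}); this reduction cannot be dispensed with.

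Now the two structural defects. First, by invoking Corollary \ref{cor:hgssub} you manufacture a characteristic-$2$ problem that the paper does not have: $k\mathbb{Z}_2$ is cosemisimple as a coalgebra in every characteristic (any group algebra is), and it is self-dual as a Yetter-Drinfeld module, so Theorem \ref{thm:hgssubL} applies directly and yields $H_b^*(H(F))\simeq H_{\rm GS}^*(H(F)\rtimes k\mathbb{Z}_2, k\mathbb{Z}_2)$ uniformly in all characteristics; your proposed char-$2$ fallback (redoing everything with explicit resolutions) is both vague and unnecessary. Second, the free product decomposition you hope for does exist --- it is \cite[Theorem 5.9]{bi18} --- but it is valid only in degrees $p\geq 2$, and this is not a technicality: if it held in degree $1$ it would give $H^1_b(H(F)) \simeq H^1_{\rm GS}(\mathcal{B}(E),\cdot)\oplus H^1_{\rm GS}(k\mathbb{Z}_2,\cdot)=0$, contradicting the statement, and your suggestion that the degree-one class comes from ``the degree-one cohomology of $\mathbb{Z}_2$'' is wrong, since that cohomology vanishes when $2\neq 0$ in $k$. (Your alternative guess, the $k_\eps$-summand, is the right intuition, but making it precise requires a low-degree Mayer--Vietoris sequence that the cited result does not provide.) The paper sidesteps all degree-$\leq 1$ bookkeeping by noting that $H^0_b(A)=k$ for any Hopf algebra and that $H^1_b(H(F))=k$ is an easy direct computation from the explicit complex of \cite[Proposition 5.3]{bi16}; it then uses the free product formula only for $p\geq 2$, where $H^p_{\rm GS}(k\mathbb{Z}_2,k\mathbb{Z}_2)\simeq \Ext^p_{k\mathbb{Z}_2}(k,k\mathbb{Z}_2)=0$ and Theorem \ref{thm:hbbe}(1) finishes the computation.
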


\begin{proof}
  First notice that one always has $H^0_b(A)=k$ for any Hopf algebra, while the computation of $H^1_b(H(F))$ is extremely easy (see the complex in \cite[Proposition 5.3]{bi16}), so we concentrate on degree $p\geq 2$. First consider the asymmetry case: $F=E^tE^{-1}$.
Consider the $\mathbb Z_2$-action of Proposition \ref{prop:iso} and the Hopf algebra map 
$$\varepsilon \otimes {\rm id} : H(F)\rtimes k\mathbb Z_2\to k\mathbb Z_2$$
This is cocentral, and the associated Hopf subalgebra $B$ is clearly the image of the natural embedding $H(F)\hookrightarrow \H(F)\rtimes k \mathbb Z_2$. 
Theorem \ref{thm:hgssubL} gives an isomorphism
\begin{equation*}
H^*_b(H(F))  \simeq H_{\rm GS}^*(H(F)\rtimes k\mathbb Z_2, k\mathbb Z_2 )
\end{equation*}
Considering now the isomorphism of Proposition \ref{prop:iso}, we obtain the isomorphism 
\begin{equation*}H^*_b(H(F)) 
 \simeq    H_{\rm GS}^p(\mathcal B(E)*k\mathbb Z_2 , k\mathbb Z_2)
\end{equation*}
Since $\mathcal B(E)$ is cosemisimple as well, \cite[Theorem 5.9]{bi18} yields, for $p\geq 2$,
\begin{equation*}H^p_b(H(F)) 
 \simeq  
 H_{\rm GS}^p(\mathcal B(E), k\mathbb Z_2)\oplus H_{GS}^p(k\mathbb Z_2 , k\mathbb Z_2)
\end{equation*}
Since $k\mathbb Z_2$ is cosemisimple and cocommutative, we have $H_{\rm GS}^p(k \mathbb Z_2, k\mathbb Z_2)\simeq {\rm Ext}^p_{k\mathbb Z_2}(k, k\mathbb Z_2)$, and the latter ${\rm Ext}$-space is easily seen to vanish if $p\geq 1$. We conclude by the first part of Theorem \ref{thm:hbbe}.

For a general matrix $F$,  by \cite[Theorem 1.1]{bi07} there always exists an asymmetry $F(q) \in {\rm GL}_2(k)$ such that the tensor categories of comodules $H(F)$ and $H(q)$ are equivalent, hence the monoidal invariance of bialgebra cohomology (see e.g. \cite[Theorem 7.10]{bic14})  gives the result.
\end{proof}

\begin{remark}
 One can  also compute the usual Hochschild cohomology for $H(F)$ in the asymmetry case, for particular choices of coefficients, by combining Proposition \ref{prop:iso} and the usual adjunction relation for ${\Ext}$ (see e.g. \cite[IV.12]{hs}). The computation is done in greater generality in  \cite[Theorem B]{bfgkt}, and is valid for any normalizable $F$ over any field, since Proposition \ref{prop:iso} is. Notice also that it follows from \cite{bfgkt} that $\cd(H(F))=3$ for any normalizable $F$, which was only known for $F$ an asymmetry \cite{bi18} or $F$ generic \cite{bi21}.  Here $\cd$ is the cohomological dimension, i.e. the global dimension, which, for Hopf algebras, coincides as well with the Hochschild cohomological dimension. 

\end{remark}

\end{document}